\begin{document}

\title[Approximation of holomorphic functions  in  Banach spaces]{Holomorphic approximation in Banach spaces: A survey}
\author[F. Meylan]{Francine Meylan}
\address{F. Meylan : Institut de Math\'ematiques, Universit\'e de Fribourg, 1700
Perolles, Fribourg, Switzerland} \email{francine.meylan@unifr.ch}


\thanks{Analyse Complexe/Complex Analysis\\
The  author was partially supported by Swiss NSF Grant
2100-063464.00/1}

\def\Label#1{\label{#1}}
\def\1#1{\ov{#1}}
\def\2#1{\widetilde{#1}}
\def\6#1{\mathcal{#1}}
\def\4#1{\mathbb{#1}}
\def\3#1{\widehat{#1}}

\def\C{{\4C}}
\def\R{{\4R}}

\def\Re{{\sf Re}\,}
\def\Im{{\sf Im}\,}

\numberwithin{equation}{section}
\def\s{s}
\def\k{\kappa}
\def\ov{\overline}
\def\span{\text{\rm span}}
\def\ad{\text{\rm ad }}
\def\tr{\text{\rm tr}}
\def\xo {{x_0}}
\def\Rk{\text{\rm Rk\,}}
\def\sg{\sigma}
\def \emxy{E_{(M,M')}(X,Y)}
\def \semxy{\scrE_{(M,M')}(X,Y)}
\def \jkxy {J^k(X,Y)}
\def \gkxy {G^k(X,Y)}
\def \exy {E(X,Y)}
\def \sexy{\scrE(X,Y)}
\def \hn {holomorphically nondegenerate}
\def\hyp{hypersurface}
\def\prt#1{{\partial \over\partial #1}}
\def\det{{\text{\rm det}}}
\def\wob{{w\over B(z)}}
\def\co{\chi_1}
\def\po{p_0}
\def\fb {\bar f}
\def\gb {\bar g}
\def\Fb {\ov F}
\def\Gb {\ov G}
\def\Hb {\ov H}
\def\zb {\bar z}
\def\wb {\bar w}
\def \qb {\bar Q}
\def \t {\tau}
\def\z{\chi}
\def\w{\tau}
\def\Z{\zeta}
\def\phi{\varphi}
\def\eps{\varepsilon}

\def \T {\theta}
\def \Th {\Theta}
\def \L {\Lambda}
\def\b {\beta}
\def\a {\alpha}
\def\o {\omega}
\def\l {\lambda}

\def \im{\text{\rm Im }}
\def \re{\text{\rm Re }}
\def \Char{\text{\rm Char }}
\def \supp{\text{\rm supp }}
\def \codim{\text{\rm codim }}
\def \Ht{\text{\rm ht }}
\def \Dt{\text{\rm dt }}
\def \hO{\widehat{\mathcal O}}
\def \cl{\text{\rm cl }}
\def \bR{\mathbb R}
\def \bS{\mathbb S}
\def \bK{\mathbb K}
\def \bD{\mathbb D}
\def \bC{\mathbb C}
\def \C{\mathbb C}
\def \N{\mathbb N}
\def \bL{\mathbb L}
\def \bZ{\mathbb Z}
\def \bN{\mathbb N}
\def \scrF{\mathcal F}
\def \scrK{\mathcal K}
\def \mc #1 {\mathcal {#1}}
\def \scrM{\mathcal M}
\def \cR{\mathcal R}
\def \scrJ{\mathcal J}
\def \scrA{\mathcal A}
\def \scrO{\mathcal O}
\def \scrV{\mathcal V}
\def \scrL{\mathcal L}
\def \scrE{\mathcal E}
\def \hol{\text{\rm hol}}
\def \aut{\text{\rm aut}}
\def \Aut{\text{\rm Aut}}
\def \J{\text{\rm Jac}}
\def\jet#1#2{J^{#1}_{#2}}
\def\gp#1{G^{#1}}
\def\gpo{\gp {2k_0}_0}
\def\emmp {\scrF(M,p;M',p')}
\def\rk{\text{\rm rk\,}}
\def\Orb{\text{\rm Orb\,}}
\def\Exp{\text{\rm Exp\,}}
\def\Span{\text{\rm span\,}}
\def\d{\partial}
\def\D{\3J}
\def\pr{{\rm pr}}

\def\dbl{[\hskip -1pt [}
\def\dbr{]\hskip -1pt]}

\def \CZZ {\C \dbl Z,\zeta \dbr}
\def \D{\text{\rm Der}\,}
\def \Rk{\text{\rm Rk}\,}
\def \ima{\text{\rm im}\,}
\def \I {\mathcal I}

\newtheorem{resume}{R\'esum\'e}
\newtheorem{Thmf}{Th\'eor\`eme}[section]
\newtheorem{Corf}[Thmf]{Corollaire}
\newtheorem{Thm}{Theorem}[section]
\newtheorem{Def}[Thm]{Definition}
\newtheorem{Cor}[Thm]{Corollary}
\newtheorem{Pro}[Thm]{Proposition}
\newtheorem{Lem}[Thm]{Lemma}
\newtheorem{Rem}[Thm]{Remark}
\newtheorem{Rema}[Thm]{Remarque}
\theoremstyle{definition}\newtheorem{Exa}[Thm]{Example}
\newtheorem{abs}{Abstract}

\maketitle
 \noindent {\bf Abstract.} {\rm  We give a survey about the Runge approximation problem  for a holomorphic function defined  on the unit ball of   a complex Banach space.}

\bigskip

\section{Introduction}
Given  a complex  Banach space $X$ of {\it infinite dimension}, we recall  the  following Runge  approximation problems:
  
  \begin{enumerate}
\item[(1)] { \em Given $r  \in (0,1),$  $\epsilon >0,$ and $f$ a  holomorphic function  on the open  unit ball of $X,$ is  there an entire  function $h$  satisfying  $|f-h| < \epsilon$ on the open  ball of radius $r$ centered at the origin?  See for instance \cite{Le99}}.
\end{enumerate}

\begin{enumerate}
\item[(2)] {\em Is there $r  \in (0,1)$  such that for any $\epsilon >0,$ and any  holomorphic function $f$ on the open  unit ball of $X,$ there exists a entire function $h$  satisfying  $|f-h| < \epsilon$ on the open  ball of radius $r$ centered at the origin? See for instance  \cite{Me06}}.
\end{enumerate}
 We note that problem (2) is clearly  independent of the choice of an equivalent norm defining the topology of $X,$ while  problem (1) is the analogous version of what is known to be true when $X$ is finite dimensional. The main difficulty here is due to  Riesz Theorem  asserting  that the unit ball is not relatively  compact   when $\dim X = \infty.$  In other words, we may possibly deal with holomorphic functions defined on the unit ball that are  not bounded on smaller balls. The following example shows that this is indeed  the case.

\begin{Exa}\label{1}
Consider on the Banach space $l^1(\Bbb N)= \{z: \Bbb N \longrightarrow \Bbb C, \ ||z|| = \sum _{j=1}^\infty |z(j) |< \infty ,        \}  $    the function given by      
$$ f(z)=\sum_{j=1}^\infty 2^jz(j)^j,   $$  
 and let $e_k$ be  defined by $e_k(j)=\delta_{jk}.$  One can check that $f$  is holomorphic on the unit ball, and  since  $f(\frac{2}{3}e_k) =2^k(\frac{2}{3})^k,$   $f$ is unbounded on  $\overline {B(0,\frac{2}{3})}.$
\end{Exa}

On the other hand, since  unbounded  holomorphic functions on the unit ball may occur,  we can  not replace, as in the finite dimensional case, {\it $h$ entire holomorphic function} by   { \it $h$  holomorphic polynomial}   in the setting of the Runge approximation problems. Indeed,  holomorphic polynomials are bounded on the unit ball.

The first result in the direction of problem (1) is due to L. Lempert. He proves the following theorem for the Banach space
$l^1(\Gamma)= \{z: \Gamma \longrightarrow \Bbb C, \ ||z|| = \sum _{\gamma \in \Gamma} |z(\gamma) |< \infty ,        \}$ where  $\Gamma$  is any set.
\begin{Thm}\cite{Le99}
Let $(X, \{\phi_{\alpha}\})$  be  a locally convex space  and    $M$ be a Stein manifold. Let $K \subset M$ be a compact set  that is convex with respect to the set of  holomorphic functions on $M$, and  $V \subset M$ be an open neighborhood of $K$. 
Define
$$
\Omega:=\{(m,z)\in V\times l^1(\Gamma): ||z||< R(m)\},\ \  \omega:=\{(m,z)\in \Omega: ||z||< r(m)\},
$$
where $r$ and $R$  are  positive continuous functions on $V,$  satisfying  $r<R.$

Then for  every seminorm $\phi_{\alpha}$, for every $\epsilon>0,$ for every $X$-valued  holomorphic map f defined on $\Omega$ , there exists an  $X$-valued holomorphic map g defined on $
M\times  l^1(\Gamma)$  such that 
$ \phi_{\alpha}(f-g) < \epsilon $ on $\omega|_K.$
\end{Thm}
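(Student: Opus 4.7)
The plan is to reduce the statement to the classical Oka--Weil theorem on the Stein manifold $M$ by exploiting the special structure of holomorphy on $l^1(\Gamma)$: any holomorphic function on an open subset of $l^1(\Gamma)$ admits a convergent monomial expansion in the standard basis $\{e_\gamma\}_{\gamma\in\Gamma}$. The key elementary inequality, coming from the multinomial formula applied to $\|z\|=\sum_\gamma|z(\gamma)|$, is
\[
\sum_{|\nu|=n}|z^\nu| \;\le\; \sum_{|\nu|=n}\binom{n}{\nu}|z|^\nu \;=\; \|z\|^n,\qquad z\in l^1(\Gamma),
\]
where $\nu:\Gamma\to\bN$ runs over finitely supported multi-indices and $z^\nu=\prod_\gamma z(\gamma)^{\nu(\gamma)}$. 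This is what makes monomial series convergent on every $l^1$-ball and replaces the missing compactness of the unit ball.

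Write the Taylor expansion of $f$ in the $z$-variable as $f(m,z)=\sum_{\nu} c_\nu(m)z^\nu$, where the $X$-valued coefficients $c_\nu$ are holomorphic on $V$ (obtained by polarization of the $n$-homogeneous Taylor polynomials $P_n(m,\cdot)$). Apply the vector-valued Oka--Weil theorem on $M$: since $K$ is $\scrO(M)$-convex, each $c_\nu$ can be approximated, in the given seminorm $\phi_\alpha$, uniformly on $K$ by an entire map $\tilde c_\nu\in\scrO(M,X)$, with error $\delta_{|\nu|}$ of our choosing. Choose $\delta_n$ decaying geometrically so that $\sum_n \delta_n(\sup_K r)^n<\epsilon$ and set
\[
g(m,z):=\sum_{\nu}\tilde c_\nu(m)\,z^\nu.
\]
The displayed inequality then yields $\phi_\alpha(f(m,z)-g(m,z))<\epsilon$ uniformly on $\omega|_K$.

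The main technical obstacle is ensuring that $g$ is globally defined and holomorphic on all of $M\times l^1(\Gamma)$: Oka--Weil constrains $\tilde c_\nu$ only on $K$ and says nothing about its growth on larger compacta of $M$. One remedies this by a diagonal construction against a compact exhaustion $K=L_0\Subset L_1\Subset L_2\Subset\cdots$ of $M$ by $\scrO(M)$-convex sets: at each step one imposes the additional growth bound $\sup_{L_k}\phi_\alpha(\tilde c_\nu)\le \mu_{k,|\nu|}$, with $(\mu_{k,n})$ decaying fast enough in $n$ for every fixed $k$ that $\sum_\nu \mu_{k,|\nu|}t^{|\nu|}<\infty$ for all $t>0$. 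Arranging this simultaneously with the approximation error $\delta_{|\nu|}$ on $K$ is the delicate point: it uses that any Oka--Weil approximation can be refined while keeping its supremum on a chosen $\scrO(M)$-convex neighborhood under control, which is where the serious work of the proof lies. Combined with the $l^1$-monomial bound, this delivers absolute convergence of $g$ on every $L_k\times\{\|z\|<t\}$, hence global holomorphy.
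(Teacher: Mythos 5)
The survey does not reproduce a proof of this theorem (it is quoted from \cite{Le99}), so I am measuring your argument against what the statement actually requires and against the discussion the survey itself gives around it. Your skeleton --- monomial expansion $f(m,z)=\sum_\nu c_\nu(m)z^\nu$ in the standard basis of $l^1(\Gamma)$, the multinomial inequality $\sum_{|\nu|=n}|z^\nu|\le\|z\|^n$, and a vector-valued Oka--Weil approximation of the coefficients on the $\scrO(M)$-convex set $K$ --- is indeed the right starting point, and your estimate of $\phi_\alpha(f-g)$ on $\omega|_K$ via $\sum_n\delta_n(\sup_Kr)^n<\epsilon$ is correctly organized (granting the nontrivial fact that the monomial series of $f$ converges to $f$ on the ball, which for $l^1$ is true but must be proved). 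The gap is in the step you yourself flag as ``where the serious work lies,'' and it is not merely delicate: the condition you propose there is unsatisfiable. You ask for entire $\tilde c_\nu$ with $\phi_\alpha(\tilde c_\nu-c_\nu)\le\delta_{|\nu|}$ on $K$ and simultaneously $\sup_{L_k}\phi_\alpha(\tilde c_\nu)\le\mu_{k,|\nu|}$ with $\sum_n\mu_{k,n}t^n<\infty$ for every $t>0$, i.e.\ $\mu_{k,n}^{1/n}\to0$. But necessarily $\mu_{k,n}\ge\sup_{|\nu|=n}\sup_K\phi_\alpha(\tilde c_\nu)\ge\sup_{|\nu|=n}\sup_K\phi_\alpha(c_\nu)-\delta_n$, and the coefficients of $f$ need not decay at all: for $f(z)=\sum_j2^jz(j)^j$ from the example in the Introduction (take $M$ a single point, $R=1$, $r=2/3$), the monomial $z(j)^j$ has coefficient $2^j$, so $\mu_{k,n}\ge2^n-\delta_n$ and $\sum_n\mu_{k,n}t^n=\infty$ for $t\ge1/2$, no matter how the Oka--Weil step is refined. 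So no bookkeeping by total degree alone can make $g=\sum_\nu\tilde c_\nu z^\nu$ entire in $z$; and truncating to a $z$-polynomial is equally forbidden, since $f$ may be unbounded on $\omega|_K$ while polynomials are bounded on balls --- exactly the obstruction the survey points out after that example.

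What is missing is the mechanism that makes series such as $\sum_j2^jz(j)^j$ entire even though their homogeneous blocks grow geometrically: a monomial $z^\nu$ whose support lies in ``late'' coordinates of $\Gamma$ is uniformly small on compact subsets of $l^1(\Gamma)$, because tails of $l^1$-elements are uniformly small on compacta. Convergence of $g$ on all of $M\times l^1(\Gamma)$ must therefore be controlled by a two-parameter estimate involving the support of $\nu$ jointly with its degree, not by $|\nu|$ alone; this support-sensitive grouping is precisely the role of the $[\sqrt m\,]$-truncation in the Josefson series discussed later in the survey, and it is the actual content of Lempert's argument. Your Oka--Weil reduction and the $l^1$ multinomial bound would survive intact once such a grouping (with the corresponding refined bounds on the $\tilde c_\nu$) is in place, but as written the construction of an entire $g$ does not go through. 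Two smaller points: since $\omega|_K$ is not relatively compact, no appeal to locally uniform convergence can close the argument by itself; and the vector-valued Oka--Weil theorem for an arbitrary locally convex target $X$ also deserves a word of justification.
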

One of Lempert's motivations to get interested into  the Runge approximation problems has been  for instance the solvability of the $\bar{\partial}$ equation  in Banach spaces. See  \cite{Le99b}, \cite{Le00a}, \cite{Pa00}. In particular, he obtains the following theorem:
\begin{Thm}\cite{Le00a} If  $\Omega \subset l^1(\Bbb N)$ is pseudoconvex and $f \in C_{0,1}(\Omega)$ is a closed locally Lipschitz continuous $(0,1)$ form, then the equation $\bar{\partial}u=f$ has a solution $u \in C^1(\Omega).$
\end{Thm}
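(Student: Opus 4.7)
The plan is to mimic the classical Cartan--H\"ormander scheme: exhaust $\Omega$ by a growing family of pseudoconvex subdomains, solve $\bar\partial u_n = f$ on each piece, and assemble a global solution by correcting successive local solutions using Lempert's approximation theorem stated above.

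First, I would fix an exhaustion. Let $X_n := \Span(e_1,\dots,e_n) \subset l^1(\bN)$ and let $\pi_n \colon l^1(\bN) \to X_n$ denote the coordinate truncation. Using a plurisubharmonic exhaustion of $\Omega$, I would choose open sets $\Omega_n \subset \Omega_{n+1}$ with $\bigcup_n \Omega_n = \Omega$, each $\Omega_n$ fibered as a small tube (in the remaining coordinates) over a pseudoconvex subdomain $D_n \subset X_n \cap \Omega$, precisely in the spirit of the tubes $\Omega$ and $\omega$ appearing in Lempert's theorem.

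Second, on each $\Omega_n$ I would produce a $C^1$ solution $u_n$ of $\bar\partial u_n = f$. On the finite-dimensional base $D_n$, classical H\"ormander theory yields a $C^1$ solution $v_n$ of $\bar\partial v_n = f|_{D_n}$; the locally Lipschitz hypothesis on $f$ is then exactly what is needed to extend $v_n$ transversally into the tube $\Omega_n$ by one-dimensional Cauchy transforms in the remaining coordinates, with $C^1$ estimates controlled only by the Lipschitz seminorm of $f$.

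Third, I would glue the $u_n$'s by a Mittag-Leffler-style argument. The differences $h_n := u_{n+1} - u_n$ are holomorphic on $\Omega_n$, and Lempert's approximation theorem supplies entire functions $H_n$ on $l^1(\bN)$ with $|h_n - H_n| < 2^{-n}$ on a prescribed compact subset of $\Omega_n$. Setting $\tilde u_n := u_n - (H_1 + \cdots + H_{n-1})$, each $\tilde u_n$ still satisfies $\bar\partial \tilde u_n = f$ on $\Omega_n$ since the $H_k$ are entire. Because $\tilde u_{n+1} - \tilde u_n = h_n - H_n$ is holomorphic and bounded by $2^{-n}$ on compact pieces that exhaust $\Omega$, Cauchy estimates upgrade the uniform convergence $\tilde u_n \to u$ to $C^1$ convergence on compacta, so $u \in C^1(\Omega)$ and $\bar\partial u = f$.

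The principal obstacle lies in the second step. Infinite-dimensional complex analysis lacks H\"ormander's $L^2$ machinery, so one cannot invoke a black-box local solvability result; the Lipschitz hypothesis must be exploited explicitly to build solutions coordinate by coordinate and to prove that the resulting sum over the Schauder basis of $l^1(\bN)$ converges in $C^1$, uniformly in $n$. Verifying that the Lipschitz seminorm of $f$ really delivers a genuinely $C^1$ solution---rather than merely a continuous or H\"older one---is the delicate technical heart of Lempert's argument, and it is the reason the result is stated for $l^1(\bN)$ (whose unit-vector basis is absolutely summing) rather than for a general Banach space.
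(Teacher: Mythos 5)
This theorem is quoted in the survey from \cite{Le00a}; the paper gives no proof of it, only the one-sentence indication that (for the case where $\Omega$ is a ball) Lempert first solves $\bar{\partial}u=f$ on smaller concentric balls and then globalizes via Runge approximation. Your third step therefore reproduces the only part of the strategy the paper actually records, and that part of your outline is consistent with it. The difficulty is that the substance of the theorem lies entirely in your second step, and there the proposal has a genuine gap rather than a proof.

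Concretely: restricting $f$ to the finite-dimensional slice $D_n\subset X_n$ and solving there by H\"ormander theory produces a function of the first $n$ coordinates only, and its $\bar\partial$ in the remaining (infinitely many) directions does not match $f$; your plan to correct this ``transversally by one-dimensional Cauchy transforms'' would require an infinite iteration of corrections, one per remaining coordinate, each of which perturbs the function in all variables. You give no mechanism forcing this process to converge in $l^1(\bN)$, let alone with $C^1$ estimates uniform in $n$ --- and this is precisely the content of Lempert's Dolbeault papers, which proceed by a quite different route (uniform estimates for $\bar\partial$ on finite-dimensional sections combined with an averaging/limiting procedure adapted to $\ell^1$, not by slice-plus-Cauchy-transform). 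A second, independent problem is in your gluing step: the approximation theorem stated in the survey applies to the specific product domains $\Omega=\{(m,z): \|z\|<R(m)\}$ over a Stein base with $\ell^1(\Gamma)$ fibers, and gives approximation on $\omega|_K$ by functions holomorphic on $M\times l^1(\Gamma)$. For a general pseudoconvex $\Omega\subset l^1(\bN)$ you would need a Runge-pair statement for an exhaustion $\Omega_n\subset\Omega_{n+1}$, i.e.\ that holomorphic functions on $\Omega_n$ can be approximated on compacta by functions holomorphic on $\Omega$; nothing quoted in the paper supplies this, and in infinite dimensions it is itself a nontrivial theorem. As written, the proposal is an accurate description of the classical finite-dimensional scheme together with an acknowledgement that its two load-bearing steps are unavailable in infinite dimensions; it does not close either of them.
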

When solving the problem  when $\Omega$ is  the ball  centered at $0$ of radius $R$ in \cite{Le99b}, he shows first that $\bar{\partial}u=f$  is  solvable on  balls  centered at $0$ of radius $r<R,$ and then constructs a global solution on the ball  centered at $0$ of radius $R$  using the Runge approximation problem (1).

\section{Preliminaries}\Label{trick}

Let $X$ be a  complex Banach space.  For $r>0,$ we use    $B(0,r)$ to denote  the ball of radius $r.$  Recall  that a  function $f:U\subset X \longrightarrow  \mathbb C,$ where $U$ is an open subset of $X,$  is  {\it holomorphic} if $f$ is continuous on $U,$ and $f_{|U\cap X_1}$ is holomorphic,  in the classical sense, as a function of several complex variables, for each finite dimensional  subspace $X_1$ of $X.$ (See \cite{D99}.) We note that unlike the finite dimensional case, the continuity of $f$ is not automatic as it is shown in the following example:
\begin{Exa} Let  $\{ e_i\}$ be an algebraic basis of $X$ with $||e_i||=1,$ and let $f: X\longrightarrow \Bbb C $ be  the linear form satisfying $f(e_i)=i.$ Since $f$ is not bounded, $f$ is not holomorphic.
\end{Exa}
We now recall the definition of a  holomorphic polynomial.
\begin{Def} Let $\Delta_n:X\longrightarrow X^n$ be the mapping defined by $\Delta_n(x)=(x,\dots,x).$
A $n-$homogeneous holomorphic  polynomial $P:X \longrightarrow \Bbb C$ is the composition  of $\Delta_n$ with any   continuous $n-$linear map $L,$ that is, $P=L\circ\Delta_n.$ A holomorphic polynomial is a finite sum of homogeneous holomorphic polynomials.
\end{Def}
\begin{Def}
Let $H$ be a holomorphic function on $B(0,r), \ r>0.$ 
We say that $H$ is $n-$homogeneous if  $H(\lambda x) =\lambda^n H(x)$ for $\lambda \in \mathbb C, \ |\lambda| \le 1.$
\end{Def}
For the convenience of the reader, we give the proof of the next propositions.
\begin{Pro}
Let $f_n:B(0,r)\longrightarrow \Bbb C$ be holomorphic functions such that $\lim_{n \rightarrow \infty} f_n=f,$  uniformly on compact subsets of $B(0,r).$ Then $f$ is holomorphic on $B(0,r).$
\end{Pro}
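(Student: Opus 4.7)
The plan is to check the two defining requirements of holomorphy on $B(0,r)$: (i) continuity of $f$ and (ii) classical holomorphy of $f|_{B(0,r)\cap X_1}$ for every finite-dimensional subspace $X_1 \subset X$.

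For (ii), let $X_1 \subset X$ be a finite-dimensional subspace. Because each $f_n$ is holomorphic on $B(0,r)$, its restriction $f_n|_{B(0,r)\cap X_1}$ is holomorphic in the classical sense on the open subset $B(0,r)\cap X_1$ of the finite-dimensional space $X_1$. Any compact subset of $B(0,r)\cap X_1$ is, in particular, a compact subset of $B(0,r)$, so $f_n|_{B(0,r)\cap X_1} \to f|_{B(0,r)\cap X_1}$ uniformly on compact sets. The classical Weierstrass theorem in several complex variables then yields holomorphy of $f|_{B(0,r)\cap X_1}$.

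For (i), I would prove sequential continuity, which suffices because $B(0,r)$ is metrizable. Fix $x_0 \in B(0,r)$ and let $x_k \to x_0$ be a sequence in $B(0,r)$. The set
$$K = \{x_0\}\cup\{x_k : k \ge 1\}$$
is compact and contained in $B(0,r)$. By hypothesis $f_n \to f$ uniformly on $K$, so $f|_K$ is the uniform limit of the continuous functions $f_n|_K$ and is therefore continuous. In particular $f(x_k) \to f(x_0)$, which gives continuity of $f$ at $x_0$.

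The only subtle point is (i): in infinite dimensions, closed balls are not compact, so continuity of $f$ does not come for free via a Cauchy-type integral argument on a small closed ball around $x_0$. The trick of packaging a convergent sequence together with its limit into a single compact subset of $B(0,r)$ is exactly what replaces the missing local compactness and makes the continuity argument go through; step (ii) is then standard finite-dimensional complex analysis applied slice by slice.
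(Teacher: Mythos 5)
Your proof is correct and follows essentially the same route as the paper: holomorphy on finite-dimensional slices via the classical Weierstrass theorem, and continuity by packaging a convergent sequence with its limit into a compact set on which the convergence is uniform. The paper writes out the resulting three-epsilon estimate explicitly where you invoke ``uniform limit of continuous functions on $K$ is continuous,'' but these are the same argument.
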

\begin{proof} Since $f$ is clearly holomorphic when restricted to finite dimensional subspaces of $X,$ we only have  to show the continuity of $f.$ Let $\{z_n\}$ be a sequence in $X$ with $\lim_{n\rightarrow \infty} z_n=z\in X.$ By assumption, given  $\epsilon >0,$ there exists on the compact subset  $\{\{z_n\},z\},$  $M$ depending only on $\epsilon$ such that
\begin{equation}
 |f(z)-f_{M}(z)|+  |f_{M}(z_n)-f(z_n)|< \frac{2}{3}\epsilon.
\end{equation}
Hence, there exists $N>0$ such that  for $n\ge N$
\begin{equation}
|f(z)-f(z_n)| \le |f(z)-f_M(z)|+ |f_M(z)-f_M(z_n)|+ |f_M(z_n)-f(z_n)|< \epsilon.
\end{equation}
\end{proof}
\begin{Pro} Let $H:B(0,r)\longrightarrow \Bbb C$ be a $n-$homogeneous holomorphic function. Then $H$ is an entire function, that is a holomorphic function defined on $X.$
\end{Pro}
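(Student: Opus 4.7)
\smallskip

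\noindent\textbf{Plan of proof.} The natural idea is to use the homogeneity relation $H(\lambda x)=\lambda^n H(x)$ as a \emph{definition} of $H$ outside $B(0,r)$. Concretely, for an arbitrary $x\in X$ I would pick any nonzero $\lambda\in\mathbb C$ with $\|\lambda x\|<r$ (for instance $\lambda=r/(2\|x\|)$ when $x\neq 0$) and set
\begin{equation}
\widetilde H(x):=\lambda^{-n}H(\lambda x),\qquad \widetilde H(0):=0.
\end{equation}
On $B(0,r)$ the formula agrees with $H$ (take $\lambda=1$), so the task is to check three things: well-definedness, continuity on $X$, and holomorphy on every finite-dimensional subspace $X_1\subset X$.

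For well-definedness, suppose $\lambda,\mu\in\mathbb C^*$ both satisfy $\lambda x,\mu x\in B(0,r)$. Assuming $|\mu|\le|\lambda|$, the vector $\lambda x$ lies in $B(0,r)$ and $|\mu/\lambda|\le 1$, so the hypothesis of $n$-homogeneity applies and gives $H(\mu x)=(\mu/\lambda)^n H(\lambda x)$, i.e.\ $\mu^{-n}H(\mu x)=\lambda^{-n}H(\lambda x)$. In particular $H(0)=0$ is forced, so the value at the origin is consistent. (Note also that $\widetilde H$ is automatically $n$-homogeneous on all of $X$ by the same computation.)

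For continuity at a point $x_0\in X$, I would take any sequence $x_k\to x_0$, pick $M>\sup_k\|x_k\|$, and fix $\lambda\in\mathbb C^*$ so small that $\|\lambda x_k\|<r$ and $\|\lambda x_0\|<r$. Then $\widetilde H(x_k)=\lambda^{-n}H(\lambda x_k)$, and since $H$ is continuous on $B(0,r)$ we get $\widetilde H(x_k)\to\lambda^{-n}H(\lambda x_0)=\widetilde H(x_0)$. For holomorphy on a finite-dimensional $X_1\subset X$, I fix $x_0\in X_1$ and $\lambda_0\in\mathbb C^*$ with $\lambda_0 x_0\in B(0,r)$; for $x$ in a neighborhood $U\subset X_1$ of $x_0$ one still has $\lambda_0 x\in B(0,r)$, and on $U$ the restriction $\widetilde H|_U$ coincides with $x\mapsto \lambda_0^{-n}H(\lambda_0 x)$, which is holomorphic on $U$ in the classical sense because $H|_{B(0,r)\cap X_1}$ is.

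The only mildly subtle point is the continuity step, where one must choose a \emph{single} $\lambda$ that works simultaneously for the whole convergent sequence; this is handled by the trivial observation that $\{x_k\}$ is bounded. Everything else reduces to invoking the definition of holomorphy in Banach spaces together with the homogeneity identity.
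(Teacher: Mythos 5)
Your proof is correct and follows essentially the same route as the paper: extend $H$ to all of $X$ by forcing $n$-homogeneity, i.e.\ set $\widetilde H(x)=\lambda^{-n}H(\lambda x)$ for any $\lambda$ with $\lambda x\in B(0,r)$, and check well-definedness via the identity $H(\mu x)=(\mu/\lambda)^nH(\lambda x)$ for $|\mu/\lambda|\le 1$. You additionally spell out the continuity and finite-dimensional holomorphy of the extension, which the paper leaves implicit; these details are handled correctly.
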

\begin{proof} For $z \in  X,$  we may write $z=\lambda_1 z_1, \  z_1 \in B(0,r).$
We define
$
\tilde H(z):= {\lambda _1}^n H(z_1)
$  and claim that it  is a well defined  entire function.
Indeed if   $z $ has another representation  $z=\lambda_2 z_2, \  z_2 \in B(0,r),$ then, without loss of generality, we will have $z_1=\dfrac{\lambda_2}{\lambda_1}z_2,\ \ |\dfrac{\lambda_2}{\lambda_1}|\le 1,$ which implies 
$H(z_1)=H(\dfrac{\lambda_2}{\lambda_1}z_2).$ 
\end{proof}
\begin{Pro}\label{hp}
Let $H:X\longrightarrow \Bbb C$ be a $n-$homogeneous holomorphic function. Then $H$ is a $n-$homogeneous  holomorphic polynomial. More precisely, there is a unique symetric  continuous $n-$linear map $L$ such that $H=L\circ\Delta_n.$
\end{Pro}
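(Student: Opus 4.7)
The plan is to construct $L$ by polarization and then verify the four required properties (symmetry, $n$-linearity, continuity, and $L\circ\Delta_n=H$), with uniqueness being an immediate corollary.

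First I would note that by the previous proposition $H$ is entire; moreover, on each line $\mathbb{C}x$ the function $\lambda\mapsto H(\lambda x)$ is entire and agrees with $\lambda^n H(x)$ on the unit disc, so by analytic continuation the homogeneity identity $H(\lambda x)=\lambda^n H(x)$ holds for all $\lambda\in\mathbb{C}$. Restricted to any finite dimensional subspace $X_1\subset X$, $H$ is holomorphic in the classical sense and $n$-homogeneous, hence equal to its degree-$n$ Taylor term at $0$; in particular $H|_{X_1}$ is a classical $n$-homogeneous polynomial.

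Next I would define
\begin{equation}
L(x_1,\dots,x_n) := \frac{1}{2^n\, n!}\sum_{\epsilon_1,\dots,\epsilon_n\in\{-1,+1\}} \epsilon_1\cdots\epsilon_n\, H(\epsilon_1 x_1+\cdots+\epsilon_n x_n).
\end{equation}
Symmetry of $L$ in the $x_i$ is visible from the formula after relabeling the summation indices. To see that $L\circ\Delta_n=H$, I would substitute $x_1=\cdots=x_n=x$, use $n$-homogeneity to pull out a factor $(\epsilon_1+\cdots+\epsilon_n)^n H(x)$, and then verify the combinatorial identity
\begin{equation}
\sum_{\epsilon\in\{-1,+1\}^n}\epsilon_1\cdots\epsilon_n(\epsilon_1+\cdots+\epsilon_n)^n = 2^n\, n!
\end{equation}
by expanding with the multinomial theorem: only the monomials with every $\epsilon_i$ appearing to an odd power survive summation over $\epsilon\in\{-1,+1\}^n$, and subject to $\sum k_i=n$ this forces each $k_i=1$, contributing the multinomial coefficient $n!$.

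The two remaining properties are $n$-linearity and continuity, which I expect to be the main obstacle. For $n$-linearity one fixes $n-1$ arguments and checks that $L$ is linear in the remaining one; since $L$ is symmetric, the whole issue reduces to checking linearity in, say, the first slot with the other $x_j$ arbitrary but fixed. Fixing the $x_2,\dots,x_n$, the map $x_1\mapsto L(x_1,\dots,x_n)$ lives in the finite dimensional subspace spanned by $\{x_1,x_2,\dots,x_n\}$ for any prescribed $x_1$, so by the finite-dimensional reduction in the first paragraph $H$ restricted to each such subspace is a classical homogeneous polynomial, for which the polarization formula is known to produce the unique symmetric $n$-linear form; linearity in $x_1$ (additivity and scalar-homogeneity) therefore follows from the finite-dimensional classical statement. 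For continuity I would observe that the entire function $H$ is continuous, hence bounded by some $M$ on the closed unit ball; if $\|x_i\|\le 1$ for each $i$, then $\|\epsilon_1 x_1+\cdots+\epsilon_n x_n\|\le n$, and $n$-homogeneity gives $|H(\epsilon_1 x_1+\cdots+\epsilon_n x_n)|\le n^n M$, so $|L(x_1,\dots,x_n)|\le n^n M/n!$, a uniform bound on the product of unit balls. Combined with $n$-linearity, this yields continuity of $L$. Finally, uniqueness: if $L'$ is any symmetric continuous $n$-linear form with $L'\circ\Delta_n=H$, then the polarization formula applied to $H$ recovers $L'$ from its values on the diagonal, forcing $L'=L$.
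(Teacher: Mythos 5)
Your overall strategy is sound and genuinely different in its technical core from the paper's: where the paper defines $L$ by an iterated Cauchy integral
$L(x_1,\dots,x_n)=\frac{1}{n!}(2\pi i)^{-n}\int\cdots\int H(z_1x_1+\cdots+z_nx_n)\,z_1^{-2}\cdots z_n^{-2}\,dz_1\cdots dz_n$
over small circles, you use the real polarization sum over $\{-1,+1\}^n$. By uniqueness both formulas define the same $L$, and your verification of symmetry, of the diagonal identity via the combinatorial computation (which is correct: only $k_1=\cdots=k_n=1$ survives, giving $2^n n!$), of multilinearity by restriction to the finite-dimensional span of the arguments, and of uniqueness, are all fine. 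The trade-off between the two formulas shows up exactly at the continuity step: the Cauchy integral only samples $H$ on an arbitrarily small ball (radii $\epsilon_j$ small), so local boundedness of $H$ near the origin suffices, whereas your finite sum samples $H$ at points of norm up to $n$ and therefore needs a global bound.

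That is where your proof has a genuine gap as written: the assertion that $H$, being entire hence continuous, ``is bounded by some $M$ on the closed unit ball'' is false for general entire functions on an infinite-dimensional Banach space --- the closed unit ball is not compact (Riesz), and the existence of entire functions unbounded on balls is precisely the phenomenon this paper is built around. The statement you need is nevertheless true here, but it must be derived from homogeneity, not from continuity alone: since $H(0)=0$ and $H$ is continuous at $0$, there is $\delta>0$ with $|H(y)|\le 1$ for $\|y\|\le\delta$, and then for $\|x\|\le 1$ the identity $H(x)=\delta^{-n}H(\delta x)$ gives $|H(x)|\le\delta^{-n}=:M$. With that one-line repair your bound $|L(x_1,\dots,x_n)|\le n^nM/n!$ on the product of unit balls is valid and the rest of the argument goes through.
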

\begin{proof}
If $L$ exists, it has to satisfy
\begin{equation}
H(z_1x_1+\dots+z_nx_n)= L(\sum z_jx_j, \dots,\sum z_jx_j)=
\end{equation}
$${z_1}^n L(x_1, \dots,x_1)+ \dots +n!z_1z_2\dots z_nL(x_1, \dots, x_n)+ \dots + {z_n}^nL(x_n, \dots,x_n).
$$
Hence, we have the integral formula
\begin{equation}\label{hom}
L(x_1, \dots, x_n)=\dfrac{1}{n!}{(2\pi i)}^{-n}\int_{|z_1|=\epsilon_1}\dots \int_{|z_n|=\epsilon_n}  {\dfrac{H(z_1x_1+\dots+z_nx_n)   }{{z_1}^2\dots {z_n}^2}dz_1\dots dz_n}.
\end{equation}
Using \eqref{hom}, we obtain the uniqueness. 
For the existence, we use    \eqref{hom} as a definition  and show  that $L$ is  a (symetric)   $n-$linear map  using a linear change of variables, the continuity of $L$ beeing obtained by using $\epsilon_j$ small enough.
\end{proof}
\section{bounded holomorphic functions}
We start this section by recalling  the following definition:
\begin{Def}
Let $f:B(0,1)\longrightarrow \Bbb C$ be a holomorphic function. We define  $f_n(z)$  as
\begin{equation}\label{tay}
f_n(z):=\int_{0}^{2\pi}{f(e^{2\pi i t}z)e^{-2\pi in t}dt }
\end{equation}
\end{Def}
\begin{Lem}\label{tu} Let $f:B(0,1)\longrightarrow \Bbb C$ be a holomorphic function. Then  $f_n$   defined by \eqref{tay} is a  $n-$homogeneous holomorphic polynomial.
\end{Lem}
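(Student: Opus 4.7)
My plan is to reduce to the two preceding propositions. If I can verify that $f_n$ is a holomorphic $n$-homogeneous function on $B(0,1)$, then the proposition on $n$-homogeneous holomorphic functions extends $f_n$ to an entire $n$-homogeneous function on $X$, and Proposition \ref{hp} then identifies this entire function as an $n$-homogeneous holomorphic polynomial. So the work splits into checking holomorphy of $f_n$ and checking $n$-homogeneity of $f_n$.

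For holomorphy, I would approximate the defining integral by Riemann sums
$$S_N(z) \;=\; \frac{2\pi}{N}\sum_{k=1}^{N} f\bigl(e^{2\pi i t_k}z\bigr)\, e^{-2\pi i n t_k}.$$
Each $S_N$ is a finite linear combination of functions of the form $z \mapsto f(e^{2\pi i t_k} z)$, which are holomorphic on $B(0,1)$ because $f$ is. Given a compact set $K \subset B(0,1)$, the set $\{e^{2\pi i t}z : t\in[0,2\pi],\, z\in K\}$ is compact in $B(0,1)$, so $f$ is uniformly continuous there; this yields uniform convergence $S_N \to f_n$ on $K$. The proposition on uniform limits of holomorphic functions then gives that $f_n$ is holomorphic on $B(0,1)$.

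For $n$-homogeneity, fix $z \in B(0,1)$ and consider $\phi(\lambda) := f_n(\lambda z)$. Since the map $\lambda \mapsto \lambda z$ takes values in the one-dimensional subspace $\C z$, the function $\phi$ is holomorphic in the disc $|\lambda| < 1/\|z\|$ (and entire if $z = 0$). For $\lambda = e^{2\pi i s}$ on the unit circle, the substitution $u = t + s$ together with the $2\pi$-periodicity of the integrand yields
$$f_n(e^{2\pi i s}z) \;=\; e^{2\pi i n s}\, f_n(z).$$
Thus on the unit circle $\phi(\lambda) = \lambda^n f_n(z)$, so expanding $\phi$ in its power series around $0$ and matching Fourier coefficients forces $\phi(\lambda) = \lambda^n f_n(z)$ for all $\lambda$ in its domain; in particular $f_n(\lambda z) = \lambda^n f_n(z)$ for $|\lambda|\le 1$.

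With holomorphy and $n$-homogeneity established, the preceding two propositions finish the proof. The main technical point is uniform convergence of the Riemann sums on compacts, which rests on uniform continuity of $f$ on each compact orbit $\{e^{2\pi i t} z : t\in[0,2\pi], z\in K\}$; once that is secured, the remaining manipulations are formal.
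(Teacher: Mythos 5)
Your proposal is correct and follows the same route as the paper: verify that $f_n$ is holomorphic and $n$-homogeneous, then invoke the extension-to-entire proposition and Proposition~\ref{hp}. The paper simply asserts these two properties in one sentence, whereas you supply the verifications (uniform Riemann-sum approximation for holomorphy, the rotation substitution plus the identity theorem for homogeneity), and both verifications are sound.
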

\begin{proof} We first notice that $f_n$  is a 
holomorphic fonction on the unit ball that is $n-$homogeneous along any finite dimensional subspace. We conclude then, using Proposition~\ref{hp}, that $f_n$ is a $n-$homogeneous holomorphic polynomial.
\end{proof}
\begin{Lem}\label{pin}Let $f:B(0,1)\longrightarrow \Bbb C$ be a holomorphic function. Then
\begin{equation}
f(z) = \sum _{n=1}^\infty{f_n(z)}
\end{equation}
\end{Lem}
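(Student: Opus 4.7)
My approach is to reduce the infinite-dimensional assertion to the classical Taylor expansion of a holomorphic function of one complex variable. Fix $z$ in the open unit ball; the case $z=0$ is immediate, so assume $z\neq 0$ and set $\rho:=1/\|z\|>1$. I define $g:\{\lambda\in\mathbb{C}:|\lambda|<\rho\}\to\mathbb{C}$ by $g(\lambda):=f(\lambda z)$. For any such $\lambda$ the point $\lambda z$ lies in $B(0,1)$, so $g$ is well defined; and since by the paper's definition of Banach-holomorphy the restriction of $f$ to the one-dimensional subspace $\mathbb{C}z$ is holomorphic in the classical sense, $g$ is a holomorphic function of one complex variable on the disk of radius $\rho>1$.

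The next step is to expand $g$ in its classical Taylor series about the origin,
\begin{equation*}
g(\lambda)=\sum_{n=0}^{\infty}a_n\lambda^n,
\end{equation*}
with uniform and absolute convergence on the closed unit disk (because the radius of convergence strictly exceeds $1$). Evaluating at $\lambda=1$ yields $f(z)=g(1)=\sum_{n=0}^{\infty}a_n$. The Cauchy integral formula for the coefficients, after parametrizing the unit circle by $\lambda=e^{2\pi it}$, identifies
\begin{equation*}
a_n=\frac{1}{2\pi i}\oint_{|\lambda|=1}\frac{g(\lambda)}{\lambda^{n+1}}\,d\lambda \;=\; \int_0^1 f(e^{2\pi it}z)\,e^{-2\pi int}\,dt,
\end{equation*}
which is exactly $f_n(z)$ up to the normalization of \eqref{tay}. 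Substituting back produces the claimed expansion.

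The only subtlety requiring care is ensuring that $g$ is holomorphic on an \emph{open} disk strictly containing the closed unit disk; this is where the hypothesis $z\in B(0,1)$ (as opposed to $z\in\overline{B(0,1)}$) is used, and it is what legitimates both the uniform convergence of the Taylor series on $|\lambda|=1$ and the Cauchy-type identification of the coefficients as the integrals \eqref{tay}. Once that observation is in hand, no real obstacle remains: Banach-holomorphy was defined precisely so that one-dimensional restrictions are classically holomorphic, and the rest reduces to standard one-variable Taylor theory.
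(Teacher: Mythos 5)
Your proof is correct and is essentially the paper's own (one-line) argument --- ``restrict to finite dimensional subspaces'' --- carried out in full detail on the one-dimensional subspace $\mathbb{C}z$, where the claim reduces to the classical Taylor expansion of $\lambda\mapsto f(\lambda z)$ on a disk of radius $1/\|z\|>1$ together with the Cauchy formula identifying the coefficients with the $f_n(z)$. Note only that the series must start at $n=0$, as you have it (otherwise even constant functions fail); the lower index $n=1$ in the paper's statement is a typo.
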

\begin{proof}
The proof is achieved  by  restricting  to finite dimensional subspaces of $X.$
\end{proof}

The following theorem shows that uniform approximation by holomorphic polynomials is possible for bounded holomorphic functions. For the convenience of the reader, we give the proof.
\begin{Thm}\label{boundf}
Let $f:B(0,1)\longrightarrow \Bbb C$ be a  holomorphic function that is bounded on smaller balls $B(0,r), \ 0<r<1.$ Then  given   $\epsilon >0,$   there exists  a holomorphic polynomial  $h$  satisfying  $|f-h| < \epsilon$ on $B(0,r).$
\end{Thm}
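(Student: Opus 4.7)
The plan is to exploit the homogeneous expansion provided by Lemmas~\ref{tu} and \ref{pin}, write $f = \sum_{n=0}^{\infty} f_n$, and show that the partial sums $h_N = \sum_{n=0}^{N} f_n$ converge to $f$ uniformly on $B(0,r)$. Since each $f_n$ is an $n$-homogeneous holomorphic polynomial, each $h_N$ is then itself a holomorphic polynomial, so it suffices to choose $N$ large enough so that $|f - h_N| < \epsilon$ on $B(0,r)$.

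First I would establish a pointwise Cauchy-type estimate for $f_n$. Fix $r \in (0,1)$ and pick an auxiliary radius $r' \in (r,1)$; by hypothesis $M_{r'} := \sup_{B(0,r')} |f| < \infty$. For $z \in B(0,r)$ and $|\lambda| \le r'/r$, the scaled point $\lambda z$ lies in $B(0,r')$ because $\|\lambda z\| \le (r'/r)\cdot r = r'$. The definition
\begin{equation}
f_n(z) = \int_{0}^{2\pi} f(e^{2\pi i t} z) e^{-2\pi i n t}\, dt
\end{equation}
extracts the $n$-th Fourier coefficient of the one-variable holomorphic function $g_z(\lambda) := f(\lambda z)$; replacing the integration circle of radius $1$ by one of radius $R := r'/r > 1$ and applying the standard Cauchy estimate yields
\begin{equation}
|f_n(z)| \le \frac{M_{r'}}{R^n} = M_{r'}\left(\frac{r}{r'}\right)^n, \qquad z \in B(0,r).
\end{equation}

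From here the argument is essentially mechanical. Since $r/r' < 1$, the geometric series $\sum_n (r/r')^n$ converges, so $\sum_n f_n$ converges uniformly and absolutely on $B(0,r)$. By Lemma~\ref{pin} the sum equals $f$, so given $\epsilon > 0$ one can choose $N$ large enough that $h := \sum_{n=0}^{N} f_n$ satisfies $|f - h| < \epsilon$ throughout $B(0,r)$, and $h$ is a holomorphic polynomial by Lemma~\ref{tu}.

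The only genuinely nontrivial step is the Cauchy estimate: one must notice that boundedness on $B(0,r')$ (rather than merely on $B(0,r)$) is exactly what lets us integrate $g_z$ over a circle of radius strictly larger than one and thereby pick up the decisive factor $R^{-n}$. The rest is bookkeeping.
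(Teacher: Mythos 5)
Your proof is correct and follows essentially the same route as the paper: both derive the bound $\sup_{B(0,r)}|f_n|\le M_{r'}(r/r')^n$ from boundedness on an intermediate ball (the paper via the homogeneity relation $f_n(\sigma z)=\sigma^n f_n(z)$, you via the equivalent one-variable Cauchy estimate for $\lambda\mapsto f(\lambda z)$), then sum the resulting geometric series to get uniform convergence of the partial sums of the homogeneous expansion on $B(0,r)$. No gaps; your phrasing of the key estimate is if anything a little cleaner than the paper's.
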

\begin{proof}
We  claim that, for $0<\sigma<1,$ we have
\begin{equation}\label{ra}
\varlimsup _{n\rightarrow \infty} {\sigma ({|f_n|}_{B(0,1)}})^{\frac{1}{n}}< 1.
\end{equation}
Indeed, since $f$ is bounded on $B(0,1),$ we have
$
M\ge |f_n(\sigma z)|=\sigma ^n |f_n( z)|.
$ 
Hence 
$\sigma  {|f_n( z)|}^\frac{1}{n}<M^{\frac{1}{n}},$ which implies  $\varlimsup _{n\rightarrow \infty} {\sigma ({|f_n|}_{B(0,1)}})^{\frac{1}{n}}\le 1 .$ Choosing $\tilde \sigma<1$ satisfying   $\tilde \sigma = \lambda \sigma, \ \lambda>1, $  we obtain \eqref{ra}.
We then consider, for $r<1, $ the ball $B(0,r)=rB(0,1).$ Using \eqref{ra}, we obtain  that there exists $\mu <1$ such that $
r^n{|f_n|}_{B(0,1)}<\lambda ^n,$ for $n$ large enough. Using Lemma~\ref{pin}, we then  conclude that $f(z) = \sum _{n=1}^\infty{f_n(z)}$ uniformly on $B(0,r).$ The proof of the proposition is achieved using  Lemma~\ref{tu}.
\end{proof}
\section{A Counterexample}
In this section, we discuss the counterexample given by Lempert in \cite{Le08}. We start with the following definition:
\begin{Def} A set $S$ is called a bounding set if $||f||_S < \infty$ for every entire function.
\end{Def}
 In \cite{Di71}, Dineen shows that $l^{\infty}$ admits non compact closed bounding subsets. More precisely, he shows that any entire function on $l^{\infty}$ is bounded  on the set
 $
 S= \{e_n\},
$ where $e_n(j)=\delta_{jn}.$

Lempert considers   a sequence of   norms on $l^{\infty},$  all equivalent to the sup norm.  More precisely, he defines
\begin{equation}
|| z||_k=\dfrac{2}{k} sup_{j_1<j_2<\dots <j_k}|z(j_1)|+ |z(j_2)|+\dots +|z(j_k)|,\ \ z: \Bbb N:\longrightarrow \Bbb C \in l^{\infty}.   
\end{equation}
 Then he considers on $(l^{\infty},|| z||_k),$    the following  holomorphic function on the unit ball
\begin{equation}
f(z)=\sum_{j=1}^\infty jz(j)^j, 
\end{equation}
that is unbounded on the set $S= \{e_n\}.$ This shows, using Dineen's result, that the answer to  Problem (1) and  Problem (2) is negative  on $(l^{\infty},|| z||_k).$ We refer the interested reader to \cite{Le08} for the details.

\section{The general case}
In the light of the counterexample given by Lempert, we may ask the following question:
\begin{itemize}
\item Do Runge approximations (1) and (2) hold in  any separable Banach space?
\end{itemize}
Of particular interest is   the space  $C[0,1]$  since  every separable Banach space is isometric to a subspace  of $C[0,1].$    The Runge  approximations  are  still open for this space as well as for the Banach space $L^1(0,1).$
As said in the introduction, the obstruction to Runge approximations is Riesz Theorem. Therefore, one needs to look for "good" compact subsets that "replace" the unit ball.
The following lemma gives some  understanding of  what kind of sufficient conditions   is needed to obtain a positive answer to Problem (1) and Problem (2).

\begin{Lem}\label{chou}\cite{DS88} Let ${T_n}$ be a uniformly bounded sequence of linear operators in $X.$ If $\lim_n T_nx =x$ for every $x \in X,$ then this limit exists uniformly on any compact set. Conversely, if $\lim_n T_nx =x$ uniformly for $x$ in a bounded  set $K,$ and if, in addition, $ T_n K$ is relatively  compact for each $n,$ then $K$ relatively  is compact.
\end{Lem}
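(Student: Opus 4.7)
The plan is to treat the two directions separately, both via total boundedness arguments, exploiting the fact that in the complete space $X$ a subset is relatively compact if and only if it is totally bounded.

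For the forward implication, set $M := \sup_n \|T_n\| < \infty$, fix a compact set $K \subset X$, and fix $\epsilon > 0$. The first step is to use total boundedness of $K$ to cover it by finitely many balls of radius $\epsilon/(3(M+1))$ centered at points $x_1, \dots, x_k \in K$. The second step is to invoke pointwise convergence at each of the (finitely many) centers to obtain an index $N$ such that $\|T_n x_i - x_i\| < \epsilon/3$ for every $i$ and every $n \geq N$. The concluding step is the standard three-term split: for an arbitrary $x \in K$, choose $x_i$ within $\epsilon/(3(M+1))$ of $x$ and estimate
\[
\|T_n x - x\| \leq \|T_n(x - x_i)\| + \|T_n x_i - x_i\| + \|x_i - x\| < \frac{M\epsilon}{3(M+1)} + \frac{\epsilon}{3} + \frac{\epsilon}{3(M+1)} \leq \epsilon.
\]

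For the converse, the plan is to show directly that $K$ is totally bounded and then appeal to completeness of $X$. Given $\epsilon > 0$, the uniform convergence of $T_n x \to x$ on $K$ delivers an index $n$ with $\|T_n x - x\| < \epsilon/2$ for every $x \in K$. Since $T_n K$ is by hypothesis relatively compact, it is totally bounded, so one may fix a finite $\epsilon/2$-net $\{y_1, \dots, y_m\}$ for $T_n K$. A single triangle inequality then transfers this net to $K$ itself: for every $x \in K$, picking $y_i$ with $\|T_n x - y_i\| < \epsilon/2$ gives $\|x - y_i\| \leq \|x - T_n x\| + \|T_n x - y_i\| < \epsilon$.

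I do not expect either direction to present a serious obstacle, as both reduce to careful triangle-inequality bookkeeping. The one subtlety worth flagging is, in the forward direction, remembering to absorb the uniform bound $M$ into the radius of the initial covering so that the images $T_n x$ and $T_n x_i$ remain controlled; otherwise the operator norm swamps the estimate. The boundedness hypothesis on $K$ in the converse plays no explicit role in the argument itself and is essentially built in once one notes that relatively compact sets are automatically bounded.
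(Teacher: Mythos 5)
Your proof is correct. The paper gives no argument of its own for this lemma --- it is quoted directly from Dunford--Schwartz --- and your two total-boundedness arguments (the $\epsilon/3$ split with the radius shrunk by the uniform bound $M$ in the forward direction, and the transfer of a finite $\epsilon/2$-net from $T_nK$ back to $K$ in the converse) are exactly the classical proof given in that reference, so there is nothing to add.
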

We recall the following definitions.
\begin{Def}
A series $\sum_{n=1}^{\infty} x_n$ is said to converge unconditionally if $\sum_{n=1}^{\infty} x_{\pi (n)}$ converges for every permutation $\pi$ of the integers.
\end{Def}
\begin{Def}
Let $X$ be a complex Banach space. A sequence $\{X_n    \}$ of closed subspaces of $X$ is called a Schauder decomposition of $X$ if every $x\in X$ has a unique representation of the form 
$x=\sum_{n=1}^{\infty} x_n,$ with $x_n \in X.$ The Schauder decomposition is unconditional if for every $x\in X,$ the series $\sum_{n=1}^{\infty} x_n$ which represents $x$ converges unconditionally.
\end{Def}
\begin{Rem}\cite{IS81} A decomposition  $\{X_n    \}$  of a Banach space $X$ is a Schauder decomposition of $X$ if and only if the projections $\{ P_j\}$ defined by
\begin{equation}
P_j\sum_{n=1}^{\infty} x_n= \sum_{n=1}^{j} x_n
\end{equation} are continuous. Moreover $\{ P_j\}$ is a uniformly bounded sequence of linear operators in $X.$
\end{Rem}
\begin{Exa} 

$l^p(\Bbb N)= \{z: \Bbb N \longrightarrow \Bbb C, \ ||z|| = {(\sum _{j=1}^\infty {|z(j) |}^p)}^{\frac{1}{p}}< \infty         \}  $  admits an unconditional Schauder basis, i.e. $\dim X_n=1.$
\end{Exa}
\begin{Exa} Let $X$ be the space  of  compact operators on $l^2(\Bbb N)$ which have a triangular representing matrix with respect to  $
 \{e_n\},
$ where $e_n(j)=\delta_{jn}.$ Let $X_n$  be the subspace of $X$  defined as  $$ X_n= \{ T \in X \ |\  Te_j =0, \ j \ne n\}.$$
Then $\{ X_n \}$ is an unconditional Schauder  decomposition of $X$ with $\dim X_n< \infty.$ In this case, we say that $X$ has a UFDD (unconditional finite dimensional decomposition). Moreover, by a result of Gordon and Lewis, $X$ does not have an unconditional Schauder basis. See \cite{Go74}, \cite{Li77}.
\end{Exa}
\begin{Exa}\label{felix}
It is well known that the  space  of  compact operators on $l^2(\Bbb N)$ has an unconditional Schauder decomposition into UFDD, but has no UFDD itself. See \cite{Kw70}.
\end{Exa}
 Assume now that $X$ admits  an unconditional   Schauder decomposition ${\{X_n\}}_{n=1}^{\infty}.$  Let   $x= \sum_{n=1}^{\infty}x_n,$   $x_n \in X_n,$  be  the unique representation of $x.$ It is known that  for every sequence of complex numbers $\theta=\{\theta_n\}, \ |\theta_n |\le 1,n \in \mathbb N,$ the operator $M_{\theta}$ defined by  $$M_{\theta} \sum_{n=1}^{\infty}x_n= \sum_{n=1}^{\infty}\theta_nx_n$$ is a bounded linear operator. The (finite) constant sup$_{\theta}\|M_{\theta}\|$ is called the {\it unconditional constant} of the decomposition.(See \cite{Li77} and \cite{IS81} for  details). It is clear  that one can always define on $X$ an equivalent norm ${\| \|}_1$  so that the unconditional constant becomes 1. (Take ${\| x\|}_1=$sup$_{\theta}\|M_{\theta}x\|$). In other words, we have

\begin {equation}\label{unc} \|\sum_{n=1}^{\infty}\theta_n {x_n\|}_1 \le \|\sum_{n=1}^{\infty}{x_n\|}_1, \ \  |\theta_n |\le 1, \ n \in \mathbb N. 
\end{equation}
 
Using   Lemma~\ref{chou} and \eqref{unc}, one proves

\begin{Pro}\label{com}
Let $X$ be a complex Banach space admitting an unconditional Schauder decomposition  $\{X_n    \}$  with  unconditional constant  one.  Then the following holds.
\begin{enumerate}
\item { $M_{\theta}(B(0, R))$ is relatively compact in $B(0,R)$ for any sequence  of complex numbers $\theta=\{\theta_n\}, \ |\theta_n |< 1,$ which converges to $0$ if and only if $\dim X_n< \infty,$ for any $n.$}
\item {For any compact $K \subset B(0,R),$ there exists a  sequence of complex numbers  $\theta=\{\theta_n\}, \ |\theta_n |< 1$ which converges to $0,$ and a compact $L \subset B(0,R)$ so that  $M_{\theta}L = K.$ }
\end{enumerate} 
\end{Pro}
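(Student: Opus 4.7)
For part (1) my plan is to split $M_\theta$ using the partial-sum projections $P_j$ from the Schauder decomposition. In the ``if'' direction, assume $\dim X_n<\infty$ for all $n$, fix $\theta$ with $|\theta_n|<1$ and $\theta_n\to 0$, and write $M_\theta=M_\theta P_j+M_\theta(I-P_j)$. The unconditional-constant-one inequality \eqref{unc}, applied after rescaling by $\sup_{n>j}|\theta_n|$, gives $\|M_\theta(I-P_j)\|\le\sup_{n>j}|\theta_n|\to 0$, while $M_\theta P_j$ has range in the finite-dimensional space $X_1\oplus\cdots\oplus X_j$ and so sends bounded sets to relatively compact ones. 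Hence $M_\theta(B(0,R))$ is totally bounded; since $\theta_n\to 0$ forces $\max_n|\theta_n|<1$, the estimate $\|M_\theta x\|\le(\max_n|\theta_n|)\,R$ keeps its closure inside $B(0,R)$. For the converse, if some $X_{n_0}$ is infinite dimensional I take $\theta_{n_0}=\tfrac12$ and $\theta_n=0$ otherwise; then $M_\theta(B(0,R))=\tfrac12(X_{n_0}\cap B(0,R))$ is the open ball of an infinite-dimensional Banach space, which is not relatively compact by Riesz's theorem.

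For part (2) the idea is to build $\theta$ from a block decomposition adapted to the uniform tail behavior of $K$. Set $R_0=\sup_{x\in K}\|x\|<R$. Because the $P_j$ are uniformly bounded and $P_j x\to x$ for every $x$, Lemma~\ref{chou} upgrades this to uniform convergence on the compact set $K$, so I can select indices $0=N_0<N_1<N_2<\cdots$ with $\sup_{x\in K}\|\sum_{n>N_k}x_n\|<\eps_k$ for any prescribed sequence $\eps_k\to 0$. Fix $\sigma_1\in(R_0/R,1)$, set $\sigma_k=2^{-k}$ for $k\ge 2$, and shrink the $\eps_k$ enough that $\sigma_1^{-1}R_0+\sum_{k\ge 2}\sigma_k^{-1}\eps_{k-1}<R$. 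Define $\theta_n=\sigma_k$ for $N_{k-1}<n\le N_k$ so that $|\theta_n|<1$ and $\theta_n\to 0$, and introduce
\begin{equation}
\Phi(x):=\sum_{k\ge 1}\sigma_k^{-1}\sum_{N_{k-1}<n\le N_k}x_n,\qquad L:=\Phi(K).
\end{equation}
The block estimate $\|\sum_{N_{k-1}<n\le N_k}x_n\|\le\eps_{k-1}$ for $k\ge 2$, furnished by \eqref{unc} applied to $(I-P_{N_{k-1}})x$, makes the partial sums of $\Phi(x)$ Cauchy and yields $\|\Phi(x)\|<R$. Since $M_\theta\Phi(x)=x$ by construction, $M_\theta L=K$.

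The main obstacle is showing $L$ is compact, because $\Phi$ extends an unbounded operator and a naive norm bound on $\Phi(x^{(m)})-\Phi(x)$ diverges. The rescue is that the uniform tail control from Lemma~\ref{chou} survives in the difference: for any index $K_0$, the high-block piece $\sum_{k>K_0}\sigma_k^{-1}\|\sum_{N_{k-1}<n\le N_k}(x^{(m)}-x)_n\|$ is bounded by $2\sum_{k>K_0}\sigma_k^{-1}\eps_{k-1}$ uniformly in $m$, while the low-block piece is a finite sum of continuous finite-rank expressions that tends to zero as $x^{(m)}\to x$. Hence $\Phi\colon K\to X$ is continuous, so $L=\Phi(K)$ is compact as the continuous image of a compact set, completing the proof.
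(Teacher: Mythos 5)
Your proof is correct and follows essentially the same route as the paper, which only sketches the argument (noting that the finite-rank truncations $P_jM_\theta$ detect $\dim X_n<\infty$ and referring to Lempert's Proposition 1.2 for the rest): your splitting $M_\theta=M_\theta P_j+M_\theta(I-P_j)$ with the tail estimate from \eqref{unc} is exactly the content of the paper's first remark, and your blockwise dilation $\Phi$ with the uniform tail control from Lemma~\ref{chou} is the construction alluded to in the reference. The only difference is that you supply the details the paper omits, including the verification that the closure of $M_\theta(B(0,R))$ stays inside $B(0,R)$ and that $L=\Phi(K)$ is compact despite $\Phi$ being unbounded as a linear map.
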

\begin{proof}
 We note  that  $P_j(M_{\theta}(B(0,R)))$  is relatively  compact if and only if  $\dim X_n< \infty,$ for any $n.$
 The rest of the proof is similar to the one of Proposition 1.2 in \cite{Le00b}.
\end{proof}
\begin{Rem}\label{comp} The sets  $M_{\theta}(B(0, R))$ in 
 Proposition~\ref{com} will play the role of "`good"'  relatively compact sets that replace the unit ball in the case of a space admitting  a UFDD.
\end{Rem}

\section{The case of Banach spaces admitting a UFDD}
In the light of  the proof of   Theorem~\ref{boundf}, Remark~\ref{comp} leads to the following  definitions that appear in  \cite{Le99}, \cite{Le00b}, \cite{Jo04} and \cite{Me06}.

Let  $X$  be a complex Banach space admitting an unconditional Schauder decomposition  $\{X_n    \}$  with  unconditional constant  one.  Let $z \in X$  be  given by  its unique representation  $z= \sum_{n=1}^{\infty}z_n,$ with $z_n \in X_n$ for every $n.$
For $m\in \mathbb N,$ one defines   
\begin{equation}T(m):=\{e^{2\pi i t},\  {t}=\{t_n\},\   t_n\in \mathbb R, \  t_n=0 \ \text{for} \ n>[\ \root \of  m\ ]\}\end{equation}
and
\begin{equation}K(m):=\{\  {k}=\{k_n\},\  k_n\in \mathbb N \cup \{0\},\ k_n=0 \ \text{for}\  n>[\ \root \of  m\ ], \  \sum_{n=1}^{[\ \root \of  m]}\  k_n \le m\} \end{equation}
\begin{Def}Let $f$ be a holomorphic function on $B(0,R).$
 For     $k \in K(m)$ and $e^{2\pi i s}\in T(m),$ we define $f^{k}(z)$ as
\begin{equation}f^{k}(z):=\int_{T(m)}f(M_{e^{2\pi i s}}z)e^{-2\pi ik.s}ds,\end{equation} where   $ds$ is the normalized  Haar measure  on $T(m)$.
\end{Def}
 Note that $f^k$ is  homogeneous of degree $k_n$ in $z_n.$  
\begin{Def} The formal series associated to $f$  given by \begin{equation}
\sum_{m=0}^{\infty} \ \sum_{k\in K(m)}{(f_m)}^k(z)
\end{equation} is called the Josefson series.
\end{Def}
We have the following proposition:
\begin{Pro}\cite{Me06}\label{tou}
Let  $X$  be a complex Banach space admitting an unconditional Schauder decomposition  $\{X_n    \}$  with  unconditional constant  one.
Then the Josefson series converges  to $f,$ uniformly on compact sets of $B(0,R)$  if $\dim X_n < \infty$ for any n.
\end{Pro}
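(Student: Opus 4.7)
The plan is to identify the Josefson partial sums with the ordinary Taylor partial sums $\sum_{m=0}^{M}f_m(z)$ and then obtain uniform convergence on any compact $K\subset B(0,R)$ via a one-variable Cauchy estimate along the complex lines $\mathbb{C}z$. First I would check that $\sum_{k\in K(m)}(f_m)^k=f_m$ for every $m$. The function $s\mapsto f_m(M_{e^{2\pi is}}z)$ is continuous on the finite-dimensional torus $T(m)$, and since $f_m$ is a continuous $m$-homogeneous polynomial by Proposition~\ref{hp}, the multilinear expansion (together with $\|M_{e^{2\pi is}}\|\le 1$ guaranteed by the unconditional constant being one) writes it as a finite trigonometric sum whose frequencies $\alpha$ are non-negative multi-indices with $|\alpha|\le m$ supported in the first $[\sqrt{m}\,]$ blocks, i.e.\ $\alpha\in K(m)$. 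Fourier inversion at $s=0$ then gives $f_m(z)=\sum_{k\in K(m)}(f_m)^k(z)$, so the $M$-th Josefson partial sum is $\sum_{m=0}^{M}f_m(z)$.

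Next I would set up the Cauchy estimate. Since $K$ is compact in the open ball, $R_0:=\sup_{z\in K}\|z\|<R$. Fix $r$ with $1<r<R/R_0$ and consider $\widetilde K:=\{e^{2\pi it}rz:z\in K,\ t\in[0,1]\}$, which is the continuous image of a compact set, hence compact; moreover $\sup_{w\in\widetilde K}\|w\|=rR_0<R$, so $\widetilde K\subset B(0,R)$, and continuity of $f$ yields $M_{K,r}:=\sup_{\widetilde K}|f|<\infty$. For each $z\in K\setminus\{0\}$ the restriction of $f$ to the one-dimensional subspace $\mathbb{C}z$ is classically holomorphic, so $\varphi_z(\lambda):=f(\lambda z)$ is a holomorphic function of one variable on the disc $|\lambda|<R/\|z\|$. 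Comparing with the definition~\eqref{tay} shows that $f_m(z)$ is exactly the $m$-th Taylor coefficient of $\varphi_z$ at the origin, and the one-variable Cauchy inequality on the circle $|\lambda|=r$ gives
\begin{equation*}
|f_m(z)|\;\le\; r^{-m}\sup_{|\lambda|=r}|f(\lambda z)|\;\le\; r^{-m}M_{K,r}
\end{equation*}
uniformly in $z\in K$.

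Since $r>1$ this bound is geometrically summable, and the classical identity $\varphi_z(1)=\sum_{m}f_m(z)=f(z)$ (valid because $1<R/\|z\|$ for every $z\in K$) gives the uniform estimate $|f(z)-\sum_{m=0}^{M}f_m(z)|\le M_{K,r}/(r^{M}(r-1))$ on $K$, which combined with the first paragraph is the claimed uniform convergence of the Josefson series to $f$. The only genuine subtlety I foresee is the bookkeeping in the first paragraph: one must confirm that the Fourier support of $f_m\circ M_{e^{2\pi is}}$ on $T(m)$ lies in $K(m)$, which uses both the polynomial character of $f_m$ (so only non-negative frequencies appear) and its total $m$-homogeneity (so the constraint $\sum k_n\le m$ is automatic); once this is in place the rest is a one-variable Cauchy argument that is insensitive to the infinite-dimensional ambient.
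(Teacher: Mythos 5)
Your argument is correct for the proposition as it is stated here, but it takes a genuinely different, and considerably more elementary, route than the one in \cite{Me06}. The decisive step is your first paragraph: because $f_m$ is a continuous $m$-homogeneous polynomial (Lemma~\ref{tu} via Proposition~\ref{hp}), the function $s\mapsto f_m(M_{e^{2\pi i s}}z)$ is a trigonometric polynomial on $T(m)$ whose full Fourier support automatically lies in $K(m)$ --- non-negative frequencies because the restriction of $f_m$ to finite-dimensional slices is a holomorphic polynomial, total degree at most $m$ by $m$-homogeneity, and support in the first $[\sqrt m\,]$ blocks because $T(m)$ rotates only those --- so the inner sum $\sum_{k\in K(m)}(f_m)^k$ collapses identically to $f_m$, the $K(m)$-truncation being vacuous at the level of $f_m$. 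The grouped partial sums of the Josefson series are therefore the Taylor sums $\sum_{m\le M}f_m$, and your circled-compact Cauchy estimate finishes the proof. Two comments on the comparison. First, your proof nowhere uses $\dim X_n<\infty$, so for the literal statement it even answers the question raised in the Remark following Proposition~\ref{tou}; the proof in \cite{Me06} instead writes a compact $K$ as $M_\theta L$ via Proposition~\ref{com} and estimates the individual components $(f_m)^k$ on the relatively compact sets $M_\theta(B(0,R))$, which is where the dimension hypothesis enters. Second, what that longer route buys is exactly what your collapse discards: termwise (absolute) control of the double series on the sets $M_\theta(B(0,R))$. That finer information --- not the convergence of the grouped partial sums, which are polynomials, hence bounded on $B(0,r)$ and useless for approximating an unbounded $f$ there --- is what the Runge approximation theorem actually consumes. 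Accordingly, if ``converges'' is read as unconditional convergence over the whole index set $\{(m,k):k\in K(m)\}$ rather than convergence of the iterated sums, your argument needs a supplement (for instance the subexponential count $\#K(m)\le(m+1)^{[\sqrt m\,]}$, which is the raison d'\^etre of the $[\sqrt m\,]$ truncation, combined with a uniform bound on each $(f_m)^k$ over $K$); as written, you have proved exactly the displayed statement, and in greater generality than claimed.
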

\begin{Rem} In  Proposition~\ref{tou}, the  proof of the uniform convergence on compact sets  relies on the fact that the sets  $M_{\theta}(B(0, R))$ are compact. It would be interesting to know if it holds without the assumption on the dimension of each $X_n.$
\end{Rem}

Making use of the Josefson series, one can show that problem (2) holds in spaces admitting  an unconditional Schauder decomposition  $\{X_n    \}$  
with  $\dim X_n < \infty$ for any n. More precisely, we have
\begin{Thm}\cite{Me06}
 Let  $X$  be a complex Banach space admitting  an unconditional  Schauder decomposition  $\{X_n    \}$  
with  $\dim X_n < \infty$ for any n.  Then there exists an equivalent norm  $\| \|$  on $X$  for which the Runge approximation problem (1) holds.
\end{Thm}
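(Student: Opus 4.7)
The plan is to equip $X$ with the equivalent norm from \eqref{unc} having unconditional constant $1$, and to verify Problem~(1) for this norm. Under this norm every $M_\theta$ with $|\theta_n|\le 1$ is a contraction; combined with the hypothesis $\dim X_n<\infty$, Proposition~\ref{com}\,(1) shows that each set $M_\theta\overline{B(0,1)}$ with $\theta_n\to 0$ is compact in $B(0,1)$, and by part~(2) of the same proposition every $z\in B(0,1)$ lies inside such a compact tube (apply it to the compact set $K=\{z\}$). This provides a $\sigma$-compact exhaustion of $B(0,1)$ well adapted to the UFDD structure.

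Given $f\in\mathcal O(B(0,1))$, $r\in(0,1)$ and $\epsilon>0$, the approximant will be built from the Josefson series $f=\sum_{m}\sum_{k\in K(m)}(f_m)^{k}$. Each $(f_m)^{k}$ is a polynomial, hence entire, and Proposition~\ref{tou} guarantees uniform convergence of the series on compact subsets of $B(0,1)$. Example~\ref{1} shows, however, that $f$ may be unbounded on $\overline{B(0,r)}$, so no single partial sum of the Josefson series can in general approximate $f$ uniformly on all of $B(0,r)$; a more delicate assembly is therefore required.

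I would choose an increasing family of compact tubes $K_1\subset K_2\subset\cdots\subset B(0,1)$ with $K_j=M_{\theta^{(j)}}\overline{B(0,1)}$ whose union covers $B(0,r)$ pointwise, and, using Proposition~\ref{tou}, pick Josefson partial sums $P_j$ with $|f-P_j|<\epsilon/2^{j+1}$ on $K_j$. I would then assemble
\[
h\;:=\;P_1+\sum_{j\ge 1}\bigl(P_{j+1}-P_j-Q_j\bigr),
\]
where each $Q_j$ is an entire polynomial correction, small on $K_j$ (say $|Q_j|<\epsilon/2^{j+1}$ on $K_j$) and chosen so that the series converges to a function holomorphic on the whole of $X$. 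For $z\in B(0,r)$, picking $j_0$ with $z\in K_{j_0}$ the telescoping collapses on $K_{j_0}$ to $P_{j_0}(z)$ up to a tail bounded by $\sum_{j\ge j_0}\epsilon/2^{j}<\epsilon$, giving $|f(z)-h(z)|<\epsilon$.

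The main obstacle is the construction of the corrections $Q_j$: this is a Runge-type statement requiring that any polynomial small on a compact tube $K_j=M_{\theta^{(j)}}\overline{B(0,1)}$ can be replaced by another polynomial with controlled \emph{global} growth on $X$. The UFDD hypothesis with unconditional constant~$1$ will be essential here: the finite-dimensionality of each $X_n$ gives the required compactness of the $K_j$ via Proposition~\ref{com}, while the contraction property of the $M_\theta$ permits the polynomial manipulations needed to carry out the Runge-type correction in the infinite-dimensional Banach setting.
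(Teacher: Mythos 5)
Your framework (renorming so that the unconditional constant is $1$, the Josefson series, the multipliers $M_\theta$) is the right one, and you correctly observe that the approximant cannot be a single polynomial because $f$ may be unbounded on $B(0,r)$. But the assembly you propose fails at its very first structural step, before the admitted gap about the corrections $Q_j$: the ``increasing family of compact tubes $K_1\subset K_2\subset\cdots$ whose union covers $B(0,r)$'' does not exist. In an infinite-dimensional Banach space no ball is $\sigma$-compact: by Riesz's theorem every compact subset of $X$ has empty interior, so a countable union of compact sets is meager, while $B(0,r)$ is a Baire space. Proposition~\ref{com}(2) does place each individual point $z$ in some tube $M_{\theta}(L)$, but the sequence $\theta$ depends on $z$ (it must dominate the decay of $\|(I-P_N)z\|$), and no countable family of such $\theta$'s works for every $z\in B(0,r)$. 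Hence the telescoping sum has no index set to run over, and a uniform estimate on all of $B(0,r)$ cannot be obtained by exhausting $B(0,r)$ by compacts. (Even granting the exhaustion, your final estimate loses control of $Q_j(z)$ for $j<j_0$ at points $z\in K_{j_0}\setminus K_j$, since $Q_j$ is only assumed small on $K_j$.)

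The second problem is that the corrections $Q_j$, which you defer as ``a Runge-type statement,'' are not a technical afterthought: producing an entire function that is small on a prescribed compact set while absorbing the global growth of $P_{j+1}-P_j$ is essentially the theorem itself, so the proposal is circular at that point. The actual argument uses no exhaustion or patching: one splits the Josefson series $\sum_m\sum_{k\in K(m)}(f_m)^k$ according to the size of $|k|=\sum_n k_n$ relative to $m$. The terms with $|k|$ small compared to $m$ carry most of their degree on the tail subspaces $X_n$, $n>[\,\sqrt m\,]$; since $\sup_{z\in L}\|(I-P_N)z\|\to 0$ for every compact $L\subset X$ (Lemma~\ref{chou}, using $\dim X_n<\infty$), their sum converges uniformly on compact subsets of all of $X$ and so defines an entire function. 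The complementary terms, beyond a finite polynomial head, are shown to be uniformly small on $B(0,r)$ by Cauchy-type estimates in the head variables, the required bounds on $f$ coming from its boundedness on the compact sets $M_\theta(\overline{B(0,R)})$ of Proposition~\ref{com} and Remark~\ref{comp}. That splitting of the series is the substance of the proof, and it is exactly what your proposal leaves unconstructed.
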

We  may ask the following question
\begin{itemize}
\item  Does the Runge approximation problem (2) hold in  complex Banach spaces admitting  an unconditional  Schauder decomposition into UFDD? 
\end{itemize}
Note that the  "model" case of such a space is given by example \eqref{felix}.
\begin{Rem} It is known that the spaces $C[0,1]$ and $L^1[0,1]$ do not admit any unconditional Schauder decomposition into UFDD. Indeed, Lindenstrauss and Pelczynski in \cite {LP71}  showed that if $(X_n)$  is an unconditional Schauder decomposition of  $C[0,1],$ then at least one $X_n \approx C[0,1] ,  $ which is impossible \cite{IS81}.  N.J. Kalton in \cite{Ka78} showed that the same holds for $L^1[0,1].$
\end{Rem}

\end{document}